\newtheorem{theorem}{Theorem}[section]
\newtheorem{lemma}{Lemma}[section]
\newtheorem{remark}{Remark}[section]
\newtheorem{corollary}{Corollary}[section]
\numberwithin{equation}{section}
\begin{document}
	
\title{Buzano, Kre\u{\i}n and Cauchy-Schwarz inequalities}
\author{Mohammad Sababheh, Hamid Reza Moradi and Zahra Heydarbeygi}
\subjclass[2010]{47A30, 46C05, 15A60, 47A12.}
\keywords{Buzano inequality, Kre\u{\i}n inequality, Cauchy-Schwarz inequality, numerical radius.}

\begin{abstract}
The Cauchy-Schwarz, Buzano and Kre\u{\i}n inequalities are three inequalities about inner product.  The main goal of this article is to present  refinements of Buzano and Cauchy-Schwarz inequalities, and to present a new proof of a refined version of a Kre\u{\i}n-type inequality. Applications that include Buzano-type inequalities for certain operators, operator norm and numerical radius inequalities of Hilbert space operators will be presented.
\end{abstract}
\maketitle
\pagestyle{myheadings}
\markboth{\centerline {M. Sababheh, H. R. Moradi \& Z. Heydarbeygi}}
{\centerline {Buzano, Kre\u{\i}n and Cauchy-Schwarz inequalities}}
\bigskip
\bigskip

\section{Introduction}

Let $\mathcal{H}$ be a given complex Hilbert space, with inner product $\left<\cdot,\cdot\right>$. The celebrated Cauchy-Schwarz inequality states that
\begin{equation}\label{eq_cs_1}
|\left<x,y\right>|\leq \|x\|\;\|y\|,
\end{equation}
for any vectors $x,y\in\mathcal{H}$. When $x$ and $y$ are non-zero vectors,  \eqref{eq_cs_1} implies  $0\leq \frac{|\left<x,y\right>|}{\|x\|\;\|y\|}\leq 1.$ This motivates defining the angle between the vectors $x,y$ by $\psi_{x,y}$ where
\begin{equation}\label{3}
\cos {{\psi }_{x,y}}=\frac{\left| \left\langle x,y \right\rangle  \right|}{\left\| x \right\|\left\| y \right\|}; 0\leq \psi_{x,y}\leq\frac{\pi}{2}.
\end{equation}
Another possible definition for the angle is $\varphi_{x,y}$ defined as 
	\[\cos {{\varphi }_{x,y}}=\frac{\operatorname{Re}\left\langle x,y \right\rangle }{\left\| x \right\|\left\| y \right\|}; 0\leq \varphi_{x,y}\leq \pi.\] 
We refer the reader to \cite{sch} for these definitions and some details.

In \cite{5}, Kre\u \i n obtained the following inequality for angles between two vectors
\begin{equation}\label{12}
{{\varphi }_{x,z}}\le {{\varphi }_{x,y}}+{{\varphi }_{y,z}},
\end{equation}
for any nonzero $x,y,z\in {{\mathbb{C}}^{n}}$.

In  \cite{4}, Lin   showed that the following triangle inequality 
\begin{equation}\label{13}
{{\psi }_{x,y}}\le {{\psi }_{x,z}}+{{\psi }_{z,y}},
\end{equation}
holds for any nonzero $x,y,z\in {{\mathbb{C}}^{n}}$. Lin's proof used the representation
	\[{{\psi }_{x,y}}=\underset{\alpha ,\beta \in \mathbb{C}\backslash \left\{ 0 \right\}}{\mathop{\inf }}\,{{\varphi }_{\alpha x,\beta y}}=\underset{\alpha \in \mathbb{C}\backslash \left\{ 0 \right\}}{\mathop{\inf }}\,{{\varphi }_{\alpha x,y}}=\underset{\beta \in \mathbb{C}\backslash \left\{ 0 \right\}}{\mathop{\inf }}\,{{\varphi }_{x,\beta y}}\] 
and inequality \eqref{12}.

Thus, both $\varphi_{x,y}$ and $\psi_{x,y}$ satisfy the triangle inequality. Our first target in this article is to present a new proof of \eqref{13}. This new proof will follow from some inner product inequalities, that we present while refining the celebrated Buzano inequality \cite{bu}, which states
\begin{equation}\label{4}
\left| \left\langle x,z \right\rangle  \right|\left| \left\langle y,z \right\rangle  \right|\le \frac{{{\left\| z \right\|}^{2}}}{2}\left( \left| \left\langle x,y \right\rangle  \right|+\left\| x \right\|\left\| y \right\| \right)
\end{equation}
for any $x,y,z\in \mathcal{H}$.
It is important to note that Buzano inequality gives a better bound than applying the Cauchy-Schwarz inequality twice on the left side. That is, \eqref{eq_cs_1} implies
\begin{align}\label{eq_cs_buz}
\left| \left\langle x,z \right\rangle  \right|\left| \left\langle y,z \right\rangle  \right|&\leq \|z\|^2\|x\|\;\|y\|.
\end{align}
At the same time, \eqref{eq_cs_1} implies 
$$\frac{{{\left\| z \right\|}^{2}}}{2}\left( \left| \left\langle x,y \right\rangle  \right|+\left\| x \right\|\left\| y \right\| \right)\leq \|z\|^2\|x\|\;\|y\|.$$
Consequently, \eqref{4} provides a refinement of \eqref{eq_cs_buz}.

After discussing the Buzano and Kre\u{\i}n inequalities, we present some new refinements of the Cauchy-Schwarz inequality \eqref{eq_cs_1} for positive contractive operators. 

As further and interesting applications of the obtained inequalities, we present some inequalities for the numerical radius and operator norm of Hilbert space operators. In this context, let $\mathbb{B}(\mathcal{H})$ denote the algebra of all bounded linear operators acting on a Hilbert space $\mathcal{H}.$  In $\mathbb{B}(\mathcal{H})$, an operator $A$ is said to be positive, and is denoted as $A\geq 0$, if $\left<Ax,x\right>\geq 0$ for all $x\in\mathcal{H}$. The partial ordering relation ``$\leq$'' is defined among self adjoint operators as
$$A\leq B\Leftrightarrow B-A\geq 0.$$
An operator $A\in\mathbb{B}(\mathcal{H})$ is said to be a positive contractive operator  if $A$ is  positive and $A\leq I$, where $I$ is the identity operator on $\mathcal{H}.$

We recall here that the operator norm and the numerical radius of an operator $T\in\mathbb{B}(\mathcal{H})$ are defined respectively by
$$\omega(T)=\sup_{\|x\|=1}|\left<Tx,x\right>|\;{\text{and}}\;\|T\|=\sup_{\|x\|=1}\|Tx\|.$$
It is well known that $\frac{1}{2}\|T\|\leq \omega(T)\leq \|T\|,$ for $T\in\mathbb{B}(\mathcal{H})$ (see e.g., \cite[Theorem 1.3-1]{gusta}). Our applications below include refinements of the second inequality above and some other consequences. Among many results, we retrieve the well known inequality \cite[Theorem 1]{drag2}:
$$\omega(ST)\leq\frac{1}{2}\|\;|S^*|^2+|T^2|\;\|, S,T\in\mathbb{B}(\mathcal{H}).$$
In this context, the notation $|X|$ will be used to denote $(X^*X)^{\frac{1}{2}}$, for $X\in\mathbb{B}(\mathcal{H}).$\\
Another interesting application of our results is another better bound for the numerical radius of the product of two operators. In particular, we deduce that when $B$ is positive, then (see  \cite[Corollary 2.6]{r2}) $$\omega(AB)\leq \frac{3}{2}\|B\|\omega(A), A\in\mathbb{B}(\mathcal{H}).$$ The significance of this inequality is due to the following inequalities
$$\omega(AB)\leq \|AB\|\leq \|B\|\;\|A\|\leq 2\|B\|\omega(A), A,B\in\mathbb{B}(\mathcal{H}).$$ Consequently, our new bound provides a considerable refinement of this latter bound. See Remark \ref{rem3} and Corollary \ref{cor3} below for the details.

Before proceeding to the main results, we present the following observation about projections and Buzano inequality, which can be considered as a new proof of the main result in \cite{drag4}.
\begin{remark}
Let $P$ be any orthogonal projection in $\mathbb{B}(\mathcal{H})$. Put $z=Px$ in \eqref{4}, and use the fact that ${{P}^{2}}=P={{P}^{*}}$,  for any projection, then for $x,y\in \mathcal{H}$,
\[\begin{aligned}
   {{\left\| Px \right\|}^{2}}\left| \left\langle y,Px \right\rangle  \right|&=\left| \left\langle Px,Px \right\rangle  \right|\left| \left\langle y,Px \right\rangle  \right| \\ 
 & =\left| \left\langle x,{{P}^{2}}x \right\rangle  \right|\left| \left\langle y,Px \right\rangle  \right| \\ 
 & =\left| \left\langle x,Px \right\rangle  \right|\left| \left\langle y,Px \right\rangle  \right| \\ 
 & \le \frac{{{\left\| Px \right\|}^{2}}}{2}\left( \left| \left\langle x,y \right\rangle  \right|+\left\| x \right\|\left\| y \right\| \right).  
\end{aligned}\]
Thus, we have shown that 
\[\left| \left\langle Px,y \right\rangle  \right|\le \frac{1}{2}\left( \left| \left\langle x,y \right\rangle  \right|+\left\| x \right\|\left\| y \right\| \right),\]
for any orthogonal projection $P$. It is interesting that positive contractive operators satisfy the Buzano inequality, as we show in Corollary \ref{10} below.
\end{remark}

\section{Buzano and Kre\u{\i}n inequalities}
We start this section with the following lemma, which can be obtained by \cite[Inequalities (1.5)-(1.6)]{r1}; however, for the reader's convenience, we add a proof. After that we present a new proof of the Kre\u{\i}n-Lin inequality \eqref{13}, with a refinement. 
\begin{lemma}\label{23}
For any $x,y,z\in \mathcal{H}$,
\[\begin{aligned}
  & \left| \left\langle x,z \right\rangle \left\langle z,y \right\rangle  \right| \\ 
 & \le \frac{1}{2}\left[ \left| \left\langle x,z \right\rangle \left\langle y,z \right\rangle  \right|+\left| \left\langle x,y \right\rangle  \right|{{\left\| z \right\|}^{2}}+\left| \left\langle x,y \right\rangle {{\left\| z \right\|}^{2}}-\left\langle x,z \right\rangle \left\langle z,y \right\rangle  \right| \right] \\ 
 & \le \frac{1}{2}\left[ \left| \left\langle x,z \right\rangle \left\langle y,z \right\rangle  \right|+\left| \left\langle x,y \right\rangle  \right|{{\left\| z \right\|}^{2}}+\sqrt{{{\left\| x \right\|}^{2}}{{\left\| z \right\|}^{2}}-{{\left| \left\langle x,z \right\rangle  \right|}^{2}}}\sqrt{{{\left\| y \right\|}^{2}}{{\left\| z \right\|}^{2}}-{{\left| \left\langle y,z \right\rangle  \right|}^{2}}} \right] \\ 
 & \le \frac{{{\left\| z \right\|}^{2}}}{2}\left( \left\| x \right\|\left\| y \right\|+\left| \left\langle x,y \right\rangle  \right| \right).  
\end{aligned}\]
\end{lemma}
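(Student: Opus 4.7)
The plan is to unpack the three inequalities as three separate, conceptually distinct bounds and handle each with a short argument. First, I would observe the simplification $|\langle x,z\rangle\langle z,y\rangle|=|\langle x,z\rangle\langle y,z\rangle|$ (complex conjugation preserves modulus), so the leftmost inequality reduces to showing
$$|\langle x,z\rangle\langle z,y\rangle|\le |\langle x,y\rangle|\|z\|^2+\bigl|\langle x,y\rangle\|z\|^2-\langle x,z\rangle\langle z,y\rangle\bigr|.$$
This is just the triangle inequality $|a|\le|b|+|b-a|$ applied with $a=\langle x,z\rangle\langle z,y\rangle$ and $b=\langle x,y\rangle\|z\|^2$. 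So this step is essentially free.

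The middle inequality is the step I expect to be the main obstacle; it requires a genuine use of Cauchy–Schwarz in a non-obvious way. The plan is to assume $z\neq 0$ (the case $z=0$ being trivial) and perform the orthogonal decompositions
$$x=\frac{\langle x,z\rangle}{\|z\|^{2}}z+x_{\perp},\qquad y=\frac{\langle y,z\rangle}{\|z\|^{2}}z+y_{\perp},$$
where $x_\perp,y_\perp\perp z$. A direct computation yields $\|z\|^{2}\langle x_\perp,y_\perp\rangle=\|z\|^{2}\langle x,y\rangle-\langle x,z\rangle\langle z,y\rangle$, together with the identities $\|z\|^{2}\|x_\perp\|^{2}=\|x\|^{2}\|z\|^{2}-|\langle x,z\rangle|^{2}$ and similarly for $y_\perp$. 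Applying the Cauchy–Schwarz inequality $|\langle x_\perp,y_\perp\rangle|\le\|x_\perp\|\|y_\perp\|$ and multiplying through by $\|z\|^{2}$ gives exactly the required bound.

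For the rightmost inequality, after cancelling the common summands $|\langle x,z\rangle\langle y,z\rangle|$ and $|\langle x,y\rangle|\|z\|^{2}$ from both sides, the task reduces to
$$|\langle x,z\rangle||\langle y,z\rangle|+\sqrt{\|x\|^{2}\|z\|^{2}-|\langle x,z\rangle|^{2}}\sqrt{\|y\|^{2}\|z\|^{2}-|\langle y,z\rangle|^{2}}\le \|x\|\|y\|\|z\|^{2}.$$
Setting $a=|\langle x,z\rangle|$, $b=|\langle y,z\rangle|$, $\alpha=\|x\|\|z\|$, $\beta=\|y\|\|z\|$ (so $a\le\alpha$, $b\le\beta$ by \eqref{eq_cs_1}), this becomes $ab+\sqrt{\alpha^{2}-a^{2}}\sqrt{\beta^{2}-b^{2}}\le\alpha\beta$, which is immediate from the Cauchy–Schwarz inequality in $\mathbb{R}^{2}$ applied to the vectors $(a,\sqrt{\alpha^{2}-a^{2}})$ and $(b,\sqrt{\beta^{2}-b^{2}})$, whose Euclidean norms are $\alpha$ and $\beta$ respectively. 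Concatenating these three steps produces the full chain, and the orthogonal-decomposition step is the one carrying the real content.
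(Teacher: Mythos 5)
Your proposal is correct and follows essentially the same route as the paper: the first inequality is the triangle inequality, and the other two reduce to the pair of Schwarz refinements $\left|\left\langle x,y\right\rangle-\left\langle x,e\right\rangle\left\langle e,y\right\rangle\right|\le\sqrt{\|x\|^{2}-|\left\langle x,e\right\rangle|^{2}}\sqrt{\|y\|^{2}-|\left\langle y,e\right\rangle|^{2}}\le\|x\|\|y\|-|\left\langle x,e\right\rangle\left\langle y,e\right\rangle|$, which the paper simply quotes from Dragomir's work. The only difference is that you actually prove these two facts (via the orthogonal decomposition along $z$ and the Cauchy--Schwarz inequality in $\mathbb{R}^{2}$), so your argument is self-contained where the paper's is not.
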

\begin{proof}
We notice that if any of the vectors $x,y,z$ is the zero vector, then the result follows trivially. Let $x,y,z\in\mathcal{H}$ be any non-zero vectors. Then  
	\[\begin{aligned}
   \left| \left\langle x,y \right\rangle -\left\langle x,e \right\rangle \left\langle e,y \right\rangle  \right|&\le \sqrt{{{\left\| x \right\|}^{2}}-{{\left| \left\langle x,e \right\rangle  \right|}^{2}}}\sqrt{{{\left\| y \right\|}^{2}}-{{\left| \left\langle y,e \right\rangle  \right|}^{2}}} \\ 
 & \le \left\| x \right\|\left\| y \right\|-\left| \left\langle x,e \right\rangle \left\langle y,e \right\rangle  \right|, 
\end{aligned}\]
for any unit vector $e\in\mathcal{H}$. Replacing $e$ by $\frac{z}{\left\| z \right\|}$, 	
and multiplying by ${{\left\| z \right\|}^{2}}$, we infer that 
	
\begin{equation*}
\begin{aligned}
   \left| \left\langle x,z \right\rangle \left\langle z,y \right\rangle  \right|-\left| \left\langle x,y \right\rangle  \right|{{\left\| z \right\|}^{2}}&\le \left| \left\langle x,y \right\rangle {{\left\| z \right\|}^{2}}-\left\langle x,z \right\rangle \left\langle z,y \right\rangle  \right|.
\end{aligned}
\end{equation*}
Thus,
	\[\begin{aligned}
  & \left| \left\langle x,z \right\rangle \left\langle z,y \right\rangle  \right| \\ 
 & \le \frac{1}{2}\left[ \left| \left\langle x,z \right\rangle \left\langle y,z \right\rangle  \right|+\left| \left\langle x,y \right\rangle  \right|{{\left\| z \right\|}^{2}}+\left| \left\langle x,y \right\rangle {{\left\| z \right\|}^{2}}-\left\langle x,z \right\rangle \left\langle z,y \right\rangle  \right| \right] \\ 
 & \le \frac{1}{2}\left[ \left| \left\langle x,z \right\rangle \left\langle y,z \right\rangle  \right|+\left| \left\langle x,y \right\rangle  \right|{{\left\| z \right\|}^{2}}+\sqrt{{{\left\| x \right\|}^{2}}{{\left\| z \right\|}^{2}}-{{\left| \left\langle x,z \right\rangle  \right|}^{2}}}\sqrt{{{\left\| y \right\|}^{2}}{{\left\| z \right\|}^{2}}-{{\left| \left\langle y,z \right\rangle  \right|}^{2}}} \right] \\ 
 & \le \frac{{{\left\| z \right\|}^{2}}}{2}\left( \left\| x \right\|\left\| y \right\|+\left| \left\langle x,y \right\rangle  \right| \right).  
\end{aligned}\]
This completes the proof.
\end{proof}

To obtain the following result, we employ the strategy used in \cite[Inequality (1.9)]{r1}

\begin{corollary}
Let $x,y,z\in\mathcal{H}$ be any vectors. Then 
\begin{align*}
\psi_{x,y}&\leq \cos^{-1}\left( \cos {{\psi }_{x,y}}+\left| \cos {{\psi }_{x,y}}-\cos {{\psi }_{x,z}}\cos {{\psi }_{z,y}} \right|-\sin {{\psi }_{x,z}}\sin {{\psi }_{z,y}}\right)\\
&\leq \psi_{x,z}+\psi_{z,y}.
\end{align*}

\end{corollary}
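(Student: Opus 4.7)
The plan is to pull a two-sided estimate out of the proof of Lemma~\ref{23} and then convert it into the two claimed inequalities by applying $\cos^{-1}$. Assuming $x,y,z$ are non-zero (otherwise the angles are undefined), I would set
\[A:=\cos{\psi _{x,y}}+\bigl|\cos{\psi _{x,y}}-\cos{\psi _{x,z}}\cos{\psi _{z,y}}\bigr|-\sin{\psi _{x,z}}\sin{\psi _{z,y}},\]
so that, by monotonicity of $\cos^{-1}$, both inequalities of the corollary reduce to the single sandwich $\cos(\psi_{x,z}+\psi_{z,y})\le A\le\cos{\psi_{x,y}}$.

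The upper bound $A\le\cos{\psi_{x,y}}$ is equivalent to
\[\bigl|\cos{\psi _{x,y}}-\cos{\psi _{x,z}}\cos{\psi _{z,y}}\bigr|\le\sin{\psi _{x,z}}\sin{\psi _{z,y}},\]
and this is the main step. To obtain it I would revisit the first line of the proof of Lemma~\ref{23} with $e=z/\|z\|$, multiplied through by $\|z\|^2$, namely
\[\bigl|\langle x,y\rangle\|z\|^2-\langle x,z\rangle\langle z,y\rangle\bigr|\le\sqrt{\|x\|^2\|z\|^2-|\langle x,z\rangle|^2}\sqrt{\|y\|^2\|z\|^2-|\langle y,z\rangle|^2},\]
and combine it with the reverse triangle inequality $|a-b|\ge\bigl||a|-|b|\bigr|$ applied to the complex numbers $a=\langle x,y\rangle\|z\|^2$ and $b=\langle x,z\rangle\langle z,y\rangle$. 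Dividing the result by $\|x\|\|y\|\|z\|^2$ produces exactly the displayed bound.

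The lower bound $A\ge\cos(\psi_{x,z}+\psi_{z,y})$ is free: the trivial estimate $|t|\ge -t$ with $t=\cos{\psi_{x,y}}-\cos{\psi_{x,z}}\cos{\psi_{z,y}}$ gives
\[A\ge\cos{\psi_{x,z}}\cos{\psi_{z,y}}-\sin{\psi_{x,z}}\sin{\psi_{z,y}}=\cos(\psi_{x,z}+\psi_{z,y}).\]
Applying the decreasing map $\cos^{-1}:[-1,1]\to[0,\pi]$ to the sandwich then yields both inequalities of the corollary, using $\psi_{x,y}\in[0,\pi/2]$ and $\psi_{x,z}+\psi_{z,y}\in[0,\pi]$ so that $\cos^{-1}\circ\cos$ acts as the identity on both endpoints.

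I do not expect a genuine obstacle here, since every step either already sits inside the proof of Lemma~\ref{23} or is elementary. The only small bookkeeping is to check $A\in[-1,1]$ so that $\cos^{-1}(A)$ is well-defined, which follows immediately from the sandwich itself.
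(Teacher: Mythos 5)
Your proposal is correct and follows essentially the same route as the paper: both extract the two-sided estimate $\left|\cos\psi_{x,y}-\cos\psi_{x,z}\cos\psi_{z,y}\right|\le\sin\psi_{x,z}\sin\psi_{z,y}$ from the first line of the proof of Lemma~\ref{23} (the paper via its normalized inequality~\eqref{2} together with the elementary bound $b\le a+|a-b|$, you via the reverse triangle inequality), and then apply the decreasing map $\cos^{-1}$ on $[0,\pi]$. Your explicit check that $A\in[-1,1]$ and that $\psi_{x,z}+\psi_{z,y}\in[0,\pi]$ is a minor tidying of details the paper leaves implicit.
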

\begin{proof}
It follows from the proof of Lemma \ref{23} that
\begin{equation}\label{2}
\begin{aligned}
  & \left| \left\langle x,z \right\rangle  \right|\left| \left\langle z,y \right\rangle  \right| \\  
 & \le \left| \left\langle x,y \right\rangle  \right|{{\left\| z \right\|}^{2}}+\sqrt{{{\left\| x \right\|}^{2}}{{\left\| z \right\|}^{2}}-{{\left| \left\langle x,z \right\rangle  \right|}^{2}}}\sqrt{{{\left\| y \right\|}^{2}}{{\left\| z \right\|}^{2}}-{{\left| \left\langle z,y \right\rangle  \right|}^{2}}}. 
\end{aligned}
\end{equation}
If we multiply \eqref{2}, by $0<\frac{1}{\left\| x \right\|\left\| y \right\|{{\left\| z \right\|}^{2}}}$, we get
\[\begin{aligned}
   \frac{\left| \left\langle x,z \right\rangle  \right|}{\left\| x \right\|\left\| z \right\|}\frac{\left| \left\langle z,y \right\rangle  \right|}{\left\| y \right\|\left\| z \right\|}&\le \frac{\left| \left\langle x,y \right\rangle  \right|}{\left\| x \right\|\left\| y \right\|}+\left| \frac{\left| \left\langle x,y \right\rangle  \right|}{\left\| x \right\|\left\| y \right\|}-\frac{\left| \left\langle x,z \right\rangle  \right|}{\left\| x \right\|\left\| z \right\|}\frac{\left| \left\langle z,y \right\rangle  \right|}{\left\| y \right\|\left\| z \right\|} \right| \\ 
 & \le \frac{\left| \left\langle x,y \right\rangle  \right|}{\left\| x \right\|\left\| y \right\|}+\sqrt{1-\frac{{{\left| \left\langle x,z \right\rangle  \right|}^{2}}}{{{\left\| x \right\|}^{2}}{{\left\| z \right\|}^{2}}}}\sqrt{1-\frac{{{\left| \left\langle z,y \right\rangle  \right|}^{2}}}{{{\left\| y \right\|}^{2}}{{\left\| z \right\|}^{2}}}}, 
\end{aligned}\]
which is equivalent to
\[\begin{aligned}
   \cos {{\psi }_{x,z}}\cos {{\psi }_{z,y}}&\le \cos {{\psi }_{x,y}}+\left| \cos {{\psi }_{x,y}}-\cos {{\psi }_{x,z}}\cos {{\psi }_{z,y}} \right| \\ 
 & \le \cos {{\psi }_{x,y}}+\sqrt{1-{{\cos }^{2}}{{\psi }_{x,z}}}\sqrt{1-{{\cos }^{2}}{{\psi }_{z,y}}} \\ 
 & =\cos {{\psi }_{x,y}}+\sin {{\psi }_{x,z}}\sin {{\psi }_{z,y}}  
\end{aligned}\]
by \eqref{3}. This implies
\[\begin{aligned}
  & \cos \left( {{\psi }_{x,z}}+{{\psi }_{z,y}} \right) \\ 
 & \le \cos {{\psi }_{x,y}}+\left| \cos {{\psi }_{x,y}}-\cos {{\psi }_{x,z}}\cos {{\psi }_{z,y}} \right|-\sin {{\psi }_{x,z}}\sin {{\psi }_{z,y}} \\ 
 & \le \cos {{\psi }_{x,y}}.
\end{aligned}\]
Now, since $\cos $ is a decreasing function on $\left[ 0,\pi  \right]$ and since $0\leq \psi_{x,z}+\psi_{z,y}\leq \pi$, the desired inequalities follow.

\end{proof}

Another refinement of the Cauchy-Schwarz inequality \eqref{eq_cs_1} can be stated as follows.
\begin{corollary}
For any $x,y,z\in \mathcal{H}$,
\[\begin{aligned}
   \left| \left\langle x,y \right\rangle  \right|{{\left\| z \right\|}^{2}}&\le \left| \left\langle x,z \right\rangle  \right|\left| \left\langle z,y \right\rangle  \right|+\sqrt{{{\left\| x \right\|}^{2}}{{\left\| z \right\|}^{2}}-{{\left| \left\langle x,z \right\rangle  \right|}^{2}}}\sqrt{{{\left\| y \right\|}^{2}}{{\left\| z \right\|}^{2}}-{{\left| \left\langle y,z \right\rangle  \right|}^{2}}} \\ 
 & \le {{\left\| z \right\|}^{2}}\left\| x \right\|\left\| y \right\|,  
\end{aligned}\]
holds. In particular,
\[\left| \left\langle x,y \right\rangle  \right|\le \left| \left\langle x,e \right\rangle  \right|\left| \left\langle e,y \right\rangle  \right|+\sqrt{{{\left\| x \right\|}^{2}}-{{\left| \left\langle x,e \right\rangle  \right|}^{2}}}\sqrt{{{\left\| y \right\|}^{2}}-{{\left| \left\langle y,e \right\rangle  \right|}^{2}}}\le \left\| x \right\|\left\| y \right\|,\]
where $e\in \mathcal{H}$ is a unit vector.
\end{corollary}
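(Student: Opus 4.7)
The plan is to obtain both inequalities as straightforward consequences of the two-step chain that already appeared in the proof of Lemma \ref{23}, rather than treating this corollary as a new computation. Specifically, substituting $e = z/\|z\|$ into the well-known refinement of Cauchy--Schwarz
\[
\left|\langle x,y\rangle - \langle x,e\rangle\langle e,y\rangle\right| \leq \sqrt{\|x\|^2 - |\langle x,e\rangle|^2}\,\sqrt{\|y\|^2 - |\langle y,e\rangle|^2} \leq \|x\|\|y\| - |\langle x,e\rangle\langle y,e\rangle|
\]
and then multiplying through by $\|z\|^2$ yields the key chain
\[
\left|\langle x,y\rangle\|z\|^2 - \langle x,z\rangle\langle z,y\rangle\right| \leq \sqrt{\|x\|^2\|z\|^2 - |\langle x,z\rangle|^2}\,\sqrt{\|y\|^2\|z\|^2 - |\langle y,z\rangle|^2} \leq \|x\|\|y\|\|z\|^2 - |\langle x,z\rangle\langle z,y\rangle|.
\]
This is already implicit in the proof of Lemma \ref{23}, so I would just cite it.

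For the first inequality of the corollary, I would apply the triangle inequality in the form $|\langle x,y\rangle|\|z\|^2 \leq |\langle x,y\rangle\|z\|^2 - \langle x,z\rangle\langle z,y\rangle| + |\langle x,z\rangle\langle z,y\rangle|$, and then bound the first summand by the left half of the displayed chain. For the second inequality of the corollary, I would simply rearrange the right half of that same chain, moving $|\langle x,z\rangle\langle z,y\rangle|$ to the left-hand side. The case where one of $x,y,z$ vanishes is handled trivially, and nonzero $z$ justifies the normalization $e = z/\|z\|$.

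For the ``In particular'' statement, I would choose $z = e$ with $\|e\| = 1$, which reduces $\|z\|^2$ to $1$ throughout, recovering the claimed refinement of the Cauchy--Schwarz inequality relative to a unit vector.

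There is no real obstacle here: the entire corollary is a packaging of the chain already proved in Lemma \ref{23}, combined with a single triangle inequality. The only minor care needed is to keep track of the modulus signs (so that $|\langle x,y\rangle|\|z\|^2$, not $\langle x,y\rangle\|z\|^2$, appears on the left) and to make clear that the second inequality uses the \emph{upper} bound in the chain, while the first inequality uses the \emph{lower} bound together with the triangle inequality.
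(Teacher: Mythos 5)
Your proposal is correct and follows exactly the route the paper intends: the paper states this corollary without proof, as an immediate consequence of the two-step chain established in the proof of Lemma \ref{23}, and your derivation (triangle inequality plus the lower half of the chain for the first inequality, rearrangement of the upper half for the second, and $z=e$ for the particular case) is precisely the omitted argument.
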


\section{Refinements of the Cauchy-Schwarz inequality via positive contractive operators with applications to the numerical radius}

The main results in this section include refinements of \eqref{eq_cs_1} via positive contractive operators. These refinements will lead to interesting applications including  numerical radius and operator norm inequalities. 

We first have the following simple observation.
\begin{lemma}
Let $A\in\mathbb{B}(\mathcal{H})$ be such that $0\leq A\leq 2I$ and let $x,y\in\mathcal{H}.$ Then
\begin{equation}\label{9}
\left| \left\langle x-Ax,y-Ay \right\rangle  \right|\leq  \left\| x \right\|\left\| y \right\|-\sqrt{\left\langle \left( 2A-{{A}^{2}} \right)x,x \right\rangle \left\langle \left( 2A-{{A}^{2}} \right)y,y \right\rangle }.
\end{equation}
\end{lemma}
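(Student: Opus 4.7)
The plan is to reduce the inequality to a two-dimensional Cauchy-Schwarz argument after a functional-calculus substitution. Set $B = I - A$. Since $A$ is self-adjoint with $0 \leq A \leq 2I$, the operator $B$ is self-adjoint with $-I \leq B \leq I$, so $B^2 \leq I$. Consequently
\[
2A - A^2 = I - (I-A)^2 = I - B^2 \geq 0,
\]
which lets us form the self-adjoint square root $C = (I - B^2)^{1/2}$. Then $B^2 + C^2 = I$, and with this notation the left-hand side is
\[
|\langle x - Ax, y - Ay\rangle| = |\langle Bx, By\rangle|,
\]
while
\[
\langle (2A - A^2)x, x\rangle = \langle C^2 x, x\rangle = \|Cx\|^2, \qquad \langle (2A - A^2)y, y\rangle = \|Cy\|^2.
\]

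Next I would apply the ordinary Cauchy-Schwarz inequality on $\mathcal{H}$ to the inner product $\langle Bx, By\rangle$, obtaining
\[
|\langle Bx, By\rangle| \leq \|Bx\|\,\|By\|.
\]
Once this is in hand, the claim reduces to proving the purely scalar inequality
\[
\|Bx\|\,\|By\| + \|Cx\|\,\|Cy\| \leq \|x\|\,\|y\|. \tag{$\ast$}
\]
This is the key step, but it is actually immediate from the two-dimensional Cauchy-Schwarz inequality: apply it to the vectors $u = (\|Bx\|, \|Cx\|)$ and $v = (\|By\|, \|Cy\|)$ in $\mathbb{R}^2$. Since $B$ and $C$ are self-adjoint,
\[
\|u\|^2 = \|Bx\|^2 + \|Cx\|^2 = \langle (B^2 + C^2)x, x\rangle = \|x\|^2,
\]
and likewise $\|v\| = \|y\|$, which gives $(\ast)$ at once.

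There is essentially no obstacle here; the only subtle point is recognizing that the quantity $2A - A^2$ inside the square root is exactly $I - B^2$ with $B = I - A$, which is why one can take its self-adjoint square root. After that the proof is a clean application of Cauchy-Schwarz twice, once in $\mathcal{H}$ and once in $\mathbb{R}^2$, glued together by the identity $B^2 + C^2 = I$.
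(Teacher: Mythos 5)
Your proof is correct and takes essentially the same route as the paper: both first apply the Cauchy--Schwarz inequality in $\mathcal{H}$ to reduce the problem to bounding $\|x-Ax\|\,\|y-Ay\|=\sqrt{\|x\|^{2}-\langle(2A-A^{2})x,x\rangle}\,\sqrt{\|y\|^{2}-\langle(2A-A^{2})y,y\rangle}$. The paper finishes with the scalar inequality $(a^{2}-b^{2})(c^{2}-d^{2})\le(ac-bd)^{2}$ for nonnegative reals, which is precisely your two-dimensional Cauchy--Schwarz step applied to $(\|Bx\|,\|Cx\|)$ and $(\|By\|,\|Cy\|)$, so the two arguments are the same computation in different packaging.
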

\begin{proof}
We have
$$
\begin{aligned}
  & \left| \left\langle x-Ax,y-Ay \right\rangle  \right| \\ 
 & \le \left\| x-Ax \right\|\left\| y-Ay \right\| \\ 
 & ={{\left( {{\left\| x \right\|}^{2}}-\left\langle \left( 2A-{{A}^{2}} \right)x,x \right\rangle  \right)}^{\frac{1}{2}}}{{\left( {{\left\| y \right\|}^{2}}-\left\langle \left( 2A-{{A}^{2}} \right)y,y \right\rangle  \right)}^{\frac{1}{2}}} \\ 
 & \le \left\| x \right\|\left\| y \right\|-\sqrt{\left\langle \left( 2A-{{A}^{2}} \right)x,x \right\rangle \left\langle \left( 2A-{{A}^{2}} \right)y,y \right\rangle },
\end{aligned}
$$

where the second inequality follows from $\left( {{a}^{2}}-{{b}^{2}} \right)\left( {{c}^{2}}-{{d}^{2}} \right)\le {{\left( ac-bd \right)}^{2}}$; ($a,b,c,d\in \mathbb{R}^+$). This completes the proof.
\end{proof}
\begin{theorem}\label{thm_contr_1}
Let $A\in \mathbb{B}\left( \mathcal{H} \right)$ be a positive contractive  operator and let $x,y\in \mathcal{H}$. Then 
\begin{equation}\label{11}
0\le \sqrt{\left\langle \left( A-{{A}^{2}} \right)x,x \right\rangle \left\langle \left( A-{{A}^{2}} \right)y,y \right\rangle }-\left|\left\langle \left( A-{{A}^{2}} \right)x,y \right\rangle \right|\le \frac{\left\| x \right\|\left\| y \right\|-\left| \left\langle x,y \right\rangle  \right|}{4}.
\end{equation}
\end{theorem}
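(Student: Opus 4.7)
The plan is to prove the two halves of inequality \eqref{11} separately, since they are of rather different natures.

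For the left-hand estimate, the key observation is that $A - A^2 = A(I-A)$ is a positive operator: both $A$ and $I-A$ are positive, and they commute, so the product is itself positive. Consequently $(u,v)\mapsto\langle (A-A^2)u,v\rangle$ is a positive semi-definite sesquilinear form, and I would deduce
\[|\langle (A-A^2)x,y\rangle| \le \sqrt{\langle (A-A^2)x,x\rangle\,\langle (A-A^2)y,y\rangle}\]
by factoring through the positive square root $(A-A^2)^{1/2}$ and applying the usual Cauchy-Schwarz inequality \eqref{eq_cs_1}. This yields the lower bound in \eqref{11}.

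The upper bound is the substantive part. The plan is to apply the lemma yielding \eqref{9} with $2A$ playing the role of $A$; this is permissible because the contractivity $0\le A\le I$ gives $0\le 2A\le 2I$. Under the substitution, the quadratic term $2A-A^2$ appearing in \eqref{9} becomes $4A-4A^2=4(A-A^2)$, which lets me pull a factor of $4$ out of the square root on the right-hand side. On the left-hand side, I would use self-adjointness of $I-2A$ to collapse $\langle x-2Ax,y-2Ay\rangle$ to $\langle x,(I-2A)^2 y\rangle$, and then exploit the identity $(I-2A)^2=I-4(A-A^2)$ to rewrite it as $\langle x,y\rangle-4\langle (A-A^2)x,y\rangle$. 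Altogether, \eqref{9} applied to $2A$ takes the form
\[|\langle x,y\rangle-4\langle (A-A^2)x,y\rangle| \le \|x\|\,\|y\|-4\sqrt{\langle (A-A^2)x,x\rangle\,\langle (A-A^2)y,y\rangle}.\]

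The proof then concludes with a single application of the triangle inequality in the form $|\langle x,y\rangle|\le|\langle x,y\rangle-4\langle (A-A^2)x,y\rangle|+4|\langle (A-A^2)x,y\rangle|$, followed by rearrangement and division by $4$. I do not expect any real obstacle: the only piece of insight is recognizing that one should apply \eqref{9} to $2A$ rather than $A$, because that is precisely what aligns the operator $2A-A^2$ occurring naturally in \eqref{9} with the operator $A-A^2$ appearing in the theorem; after that the proof is the routine identity $(I-2A)^2=I-4(A-A^2)$ together with the triangle inequality.
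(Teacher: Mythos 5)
Your proposal is correct and follows essentially the same route as the paper: both halves are handled identically, with the lower bound coming from positivity of $A-A^{2}$ and the Cauchy--Schwarz inequality for the associated positive semi-definite form, and the upper bound coming from applying \eqref{9} to $2A$ (the paper derives the inequality for $0\le A\le 2I$ first and then substitutes $2A$, which is the same computation in a different order). The algebraic identity $(I-2A)^{2}=I-4(A-A^{2})$ and the reverse triangle inequality you invoke are exactly the ingredients of \eqref{8} and the subsequent rearrangement in the paper's argument.
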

\begin{proof}
Observe that
\begin{equation}\label{8}
\begin{aligned}
   \left| \left\langle x-Ax,y-Ay \right\rangle  \right|&=\left| \left\langle x,y \right\rangle -\left\langle \left( 2A-A^2 \right)x,y \right\rangle  \right| \\ 
 & \ge \left| \left\langle x,y \right\rangle  \right|-\left| \left\langle \left( 2A-A^2 \right)x,y \right\rangle  \right|.  
\end{aligned}
\end{equation}

Combining inequalities \eqref{8} and \eqref{9} gives 
	\[\left| \left\langle x,y \right\rangle  \right|-\left| \left\langle \left( 2A-A^2 \right)x,y \right\rangle  \right|\le \left\| x \right\|\left\| y \right\|-\sqrt{\left\langle \left( 2A-{{A}^{2}} \right)x,x \right\rangle \left\langle \left( 2A-{{A}^{2}} \right)y,y \right\rangle }.\]
Whence,
	\[\sqrt{\left\langle \left( 2A-{{A}^{2}} \right)x,x \right\rangle \left\langle \left( 2A-{{A}^{2}} \right)y,y \right\rangle }-\left| \left\langle \left( 2A-A^2 \right)x,y \right\rangle  \right|\le \left\| x \right\|\left\| y \right\|-\left| \left\langle x,y \right\rangle  \right|.\] 
Replacing $A$ by $2A$, we get
	\[\sqrt{\left\langle \left( A-{{A}^{2}} \right)x,x \right\rangle \left\langle \left( A-{{A}^{2}} \right)y,y \right\rangle }-\left| \left\langle \left( A-{{A}^{2}} \right)x,y \right\rangle  \right|\le \frac{\left\| x \right\|\left\| y \right\|-\left| \left\langle x,y \right\rangle  \right|}{4},\] when $A\leq I$. This proves the second desired inequality. For the first inequality, since $A$ is positive contractive,  $A-{{A}^{2}}$ is positive. Then by the Cauchy-Schwarz inequality for positive operators, we get
\begin{equation*}
0\le \sqrt{\left\langle \left( A-{{A}^{2}} \right)x,x \right\rangle \left\langle \left( A-{{A}^{2}} \right)y,y \right\rangle }-\left|\left\langle \left( A-{{A}^{2}} \right)x,y \right\rangle\right|,
\end{equation*}
which proves the first inequality in \eqref{11}, and completes the proof of the
theorem.
\end{proof}

In fact, Theorem \ref{thm_contr_1} may be used to obtain the following easier form; as a refinement of the Cauchy-Schwarz inequality. It should be remarked that in \cite{r1}, this result was shown similarly for projections.

\begin{corollary}\label{6}
Let $A\in\mathbb{B}(\mathcal{H})$ be positive contractive. Then for $x,y\in\mathcal{H}$,
\begin{align*}
|\left<x,y\right>|+\sqrt{\left<Ax,x\right>\left<Ay,y\right>}-\left|\left<Ax,y\right>\right|\leq \|x\|\;\|y\|.
\end{align*}
In particular, if $A\in\mathbb{B}(\mathcal{H})$ is any nonzero positive  operator, then
\begin{align*}
|\left<x,y\right>|+\frac{1}{\|A\|}\left(\sqrt{\left<Ax,x\right>\left<Ay,y\right>}-\left|\left<Ax,y\right>\right|\right)\leq \|x\|\;\|y\|.
\end{align*}
\end{corollary}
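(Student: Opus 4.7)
The plan is to reduce Corollary \ref{6} directly to Theorem \ref{thm_contr_1} by applying that theorem to a carefully chosen positive contractive operator $B$ related to $A$ via $B-B^2=A/4$. The factor $\tfrac14$ on the right-hand side of \eqref{11} will then exactly cancel a $\tfrac14$ extracted from $A/4$ on the left, producing the clean refinement stated in the corollary.

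To construct such a $B$, I would solve the operator equation $B^2-B+A/4=0$ using the continuous functional calculus, setting
\[
B=\tfrac12\bigl(I-(I-A)^{1/2}\bigr).
\]
Since $0\le A\le I$ gives $0\le I-A\le I$, the square root $(I-A)^{1/2}$ exists and lies in $[0,I]$, so $0\le B\le I/2\le I$; that is, $B$ is positive contractive. A short algebraic check (expanding $B^{2}$ and simplifying) yields $B-B^{2}=A/4$.

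Feeding this $B$ into Theorem \ref{thm_contr_1} and using $B-B^{2}=A/4$, one obtains
\[
\tfrac14\bigl(\sqrt{\langle Ax,x\rangle\langle Ay,y\rangle}-|\langle Ax,y\rangle|\bigr)\le \tfrac14\bigl(\|x\|\|y\|-|\langle x,y\rangle|\bigr),
\]
which rearranges to the first inequality of Corollary \ref{6}. For the ``in particular'' claim, I would apply the first part to $A/\|A\|$, which is positive contractive whenever $A$ is a nonzero positive operator; the factor $1/\|A\|$ then distributes across both $\sqrt{\langle Ax,x\rangle\langle Ay,y\rangle}$ and $|\langle Ax,y\rangle|$, yielding the second inequality.

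The only non-obvious step is the initial observation that Theorem \ref{thm_contr_1} involves $A-A^{2}$ rather than $A$ itself, and hence one should seek a $B$ with $B-B^{2}$ a scalar multiple of $A$. The specific scalar $1/4$ is forced by the constraint $0\le B\le I$: on $[0,1]$ the function $t-t^{2}$ attains its maximum value $1/4$, and this maximum is precisely what allows the inversion $A\mapsto B$ to remain in the positive contractive range. Once this substitution is identified, all remaining manipulations—well-definedness of the square root, the expansion giving $B-B^{2}=A/4$, and the final normalization—are mechanical.
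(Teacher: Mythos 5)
Your proposal is correct and follows essentially the same route as the paper: both invert the map $B\mapsto B-B^{2}$ via the functional-calculus square root and feed the result into Theorem \ref{thm_contr_1}, letting the $\tfrac14$ on the right of \eqref{11} absorb the $\tfrac14$ coming from $B-B^{2}$. The only cosmetic difference is that you take $B=\tfrac12\bigl(I-(I-A)^{1/2}\bigr)$ with $B-B^{2}=A/4$ in a single step, whereas the paper first sets $B=\tfrac12\bigl(I+(I-4A)^{1/2}\bigr)$ for $0\le A\le \tfrac14 I$ and then rescales $A\mapsto \tfrac14 A$; the two choices are the two roots of the same quadratic and yield identical conclusions.
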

\begin{proof}
In Theorem \ref{thm_contr_1}, we have shown that
\begin{equation}\label{11_1}
0\le \sqrt{\left\langle \left( A-{{A}^{2}} \right)x,x \right\rangle \left\langle \left( A-{{A}^{2}} \right)y,y \right\rangle }-\left|\left\langle \left( A-{{A}^{2}} \right)x,y \right\rangle\right| \le \frac{\left\| x \right\|\left\| y \right\|-\left| \left\langle x,y \right\rangle  \right|}{4},
\end{equation}
for the positive contractive operator $A\in\mathbb{B}(\mathcal{H})$ and $x,y\in \mathcal{H}$. Now, let $A\in\mathbb{B}(\mathcal{H})$ be such that $0\leq A\leq\frac{1}{4}I$, and define $$B=\frac{1}{2}\left(I+(I-4A)^{\frac{1}{2}}\right).$$ Since the mapping $t\mapsto t^{\frac{1}{2}}$ is operator monotone on $[0,\infty)$ \cite[Proposition V.1.8]{bhatia}, we have
\begin{align*}
0\leq A\leq \frac{1}{4}I&\Leftrightarrow 0\leq I-4A\leq I\\
&\Rightarrow 0\leq (I-4A)^{\frac{1}{2}}\leq I\\
& \Rightarrow 0\leq\frac{1}{2}\left(I+(I-4A)^{\frac{1}{2}}\right)\leq I.
\end{align*}
Then $0\leq B\leq I$. Therefore, \eqref{11_1} applies for $B$, and we have
$$0\le \sqrt{\left\langle \left( B-{{B}^{2}} \right)x,x \right\rangle \left\langle \left( B-{{B}^{2}} \right)y,y \right\rangle }-\left|\left\langle \left( B-{{B}^{2}} \right)x,y \right\rangle\right| \le \frac{\left\| x \right\|\left\| y \right\|-\left| \left\langle x,y \right\rangle  \right|}{4}.$$
But by the definition of $B$, we have $B-B^2=A$. This implies
$$0\le \sqrt{\left\langle  A x,x \right\rangle \left\langle  A y,y \right\rangle }-\left|\left\langle  A x,y \right\rangle\right| \le \frac{\left\| x \right\|\left\| y \right\|-\left| \left\langle x,y \right\rangle  \right|}{4},$$ when $0\leq A\leq \frac{1}{4}I.$ Now, if $A$ is an arbitrary positive contractive operator, replace $A$ by $\frac{1}{4}A$ in the above inequality. This implies the desired inequality and completes the proof.
\end{proof}

The following result is a Cauchy-Schwarz type inequality for positive contractive operators. Following this result, we explain how this extends \eqref{eq_cs_1}.
\begin{corollary}\label{10}
Let $A\in\mathbb{B}(\mathcal{H})$ be a positive contractive operator. Then for $x,y\in\mathcal{H}$,
\[\left| \left\langle Ax,y \right\rangle  \right|\le \frac{\left\| x \right\|\left\| y \right\|+\left| \left\langle x,y \right\rangle  \right|}{2}.\]
\end{corollary}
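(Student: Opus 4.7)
Corollary \ref{6} supplies only a \emph{lower} bound on $|\langle Ax,y\rangle|$, so to prove the desired \emph{upper} bound fresh input is required. My plan is to observe that $I-A$ is also positive contractive, apply the ordinary Cauchy-Schwarz inequality through its square root, and then couple the resulting estimate with the operator Cauchy-Schwarz inequality for the positive operator $A$ itself.

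Concretely, I would first write
$$\langle x,y\rangle-\langle Ax,y\rangle=\langle(I-A)^{1/2}x,(I-A)^{1/2}y\rangle,$$
and apply Cauchy-Schwarz to obtain
$$|\langle x,y\rangle-\langle Ax,y\rangle|\le\sqrt{\bigl(\|x\|^{2}-\langle Ax,x\rangle\bigr)\bigl(\|y\|^{2}-\langle Ay,y\rangle\bigr)}.$$
I would then invoke the elementary inequality $(a^{2}-b^{2})(c^{2}-d^{2})\le(ac-bd)^{2}$, already used in deriving \eqref{9}, with $a=\|x\|$, $b=\sqrt{\langle Ax,x\rangle}$, $c=\|y\|$, $d=\sqrt{\langle Ay,y\rangle}$ (legitimate since $A\le I$ forces $a\ge b$ and $c\ge d$), to conclude
$$|\langle x,y\rangle-\langle Ax,y\rangle|\le\|x\|\|y\|-\sqrt{\langle Ax,x\rangle\langle Ay,y\rangle}.$$

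The reverse triangle inequality $|\langle Ax,y\rangle|-|\langle x,y\rangle|\le|\langle x,y\rangle-\langle Ax,y\rangle|$ then yields the intermediate bound
$$|\langle Ax,y\rangle|+\sqrt{\langle Ax,x\rangle\langle Ay,y\rangle}\le|\langle x,y\rangle|+\|x\|\|y\|.$$
Finally, the operator Cauchy-Schwarz inequality $|\langle Ax,y\rangle|\le\sqrt{\langle Ax,x\rangle\langle Ay,y\rangle}$ for the positive operator $A$ lets one replace the square root on the left by a second copy of $|\langle Ax,y\rangle|$; dividing the resulting estimate by $2$ gives the claim.

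The main obstacle is conceptual, not computational. Corollary \ref{6} points in the wrong direction, so one must recognize that the right object to bound is $|\langle x,y\rangle-\langle Ax,y\rangle|$ via the companion operator $(I-A)^{1/2}$, because this decomposition generates precisely the term $\sqrt{\langle Ax,x\rangle\langle Ay,y\rangle}$ that operator Cauchy-Schwarz can later absorb into $|\langle Ax,y\rangle|$. After that, the argument is routine bookkeeping.
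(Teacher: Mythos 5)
Your proof is correct, and it is genuinely more direct than the one in the paper. The paper reaches Corollary \ref{10} through the $2A-A^{2}$ machinery: it combines \eqref{8} and \eqref{9} (which require only $0\le A\le 2I$), extracts the bound $\left|\left\langle (2A-A^{2})x,y\right\rangle\right|\le\frac{1}{2}\left(\left\| x\right\|\left\| y\right\|+\left|\left\langle x,y\right\rangle\right|\right)$, rescales $A\mapsto 2A$ to get a bound on $\left|\left\langle (A-A^{2})x,y\right\rangle\right|$ with constant $\frac{1}{8}$, and then undoes the quadratic by the reparametrization $B=\frac{1}{2}\left(I+(I-4A)^{\frac{1}{2}}\right)$ from the proof of Corollary \ref{6}. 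You instead exploit the contractivity hypothesis from the outset: since $I-A\ge 0$ you can factor $\left\langle x,y\right\rangle-\left\langle Ax,y\right\rangle=\left\langle (I-A)^{\frac{1}{2}}x,(I-A)^{\frac{1}{2}}y\right\rangle$ and apply Cauchy--Schwarz directly, which produces the defect term $\sqrt{\left\langle Ax,x\right\rangle\left\langle Ay,y\right\rangle}$ without any substitution or rescaling. Both arguments ultimately rest on the same two ingredients --- the scalar inequality $(a^{2}-b^{2})(c^{2}-d^{2})\le (ac-bd)^{2}$ (valid here since $A\le I$ gives $ac\ge bd$) and the Cauchy--Schwarz inequality $\left|\left\langle Ax,y\right\rangle\right|\le\sqrt{\left\langle Ax,x\right\rangle\left\langle Ay,y\right\rangle}$ for positive $A$ --- but your route trades the paper's generality in the intermediate steps (statements valid for $0\le A\le 2I$) for a shorter, self-contained derivation of the final contractive case. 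Your preliminary observation that Corollary \ref{6} bounds $\left|\left\langle Ax,y\right\rangle\right|$ from below, and hence cannot be cited directly, is also accurate.
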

\begin{proof}
If $A\leq 2I$, it follows from \eqref{8} and \eqref{9} that
	\[\left| \left\langle x,y \right\rangle -\left\langle \left( 2A-{{A}^{2}} \right)x,y \right\rangle  \right|\le \left\| x \right\|\left\| y \right\|-\sqrt{\left\langle \left( 2A-{{A}^{2}} \right)x,x \right\rangle \left\langle \left( 2A-{{A}^{2}} \right)y,y \right\rangle }.\]
This implies
	\[\left| \left\langle x,y \right\rangle -\left\langle \left( 2A-{{A}^{2}} \right)x,y \right\rangle  \right|+\sqrt{\left\langle \left( 2A-{{A}^{2}} \right)x,x \right\rangle \left\langle \left( 2A-{{A}^{2}} \right)y,y \right\rangle }\le \left\| x \right\|\left\| y \right\|.\]
On the other hand, we have
	\[\left| \left\langle x,y \right\rangle -\left\langle \left( 2A-{{A}^{2}} \right)x,y \right\rangle  \right|+\left| \left\langle \left( 2A-{{A}^{2}} \right)x,y \right\rangle  \right|\le \left\| x \right\|\left\| y \right\|.\]
This implies
	\[-\left| \left\langle x,y \right\rangle  \right|+\left| \left\langle \left( 2A-{{A}^{2}} \right)x,y \right\rangle  \right|+\left| \left\langle \left( 2A-{{A}^{2}} \right)x,y \right\rangle  \right|\le \left\| x \right\|\left\| y \right\|,\] when $A\leq 2I$.
Thus,
	\[\left| \left\langle \left( 2A-{{A}^{2}} \right)x,y \right\rangle  \right|\le \frac{\left\| x \right\|\left\| y \right\|+\left| \left\langle x,y \right\rangle  \right|}{2},\] when $A\leq 2I$. 
Replacing $A$ by $2A$, we get
	\[\left| \left\langle \left( A-{{A}^{2}} \right)x,y \right\rangle  \right|\le \frac{\left\| x \right\|\left\| y \right\|+\left| \left\langle x,y \right\rangle  \right|}{8}.\] 
Applying the same procedure as in the proof of Corollary \ref{6}, we reach
	\[\left| \left\langle Ax,y \right\rangle  \right|\le \frac{\left\| x \right\|\left\| y \right\|+\left| \left\langle x,y \right\rangle  \right|}{2},\]
as desired.
\end{proof}
Notice that when $A=I$, Corollary \ref{10} implies \eqref{eq_cs_1}. Therefore, the above corollary provides an extension of \eqref{eq_cs_1}.

\begin{remark}\label{rem3}
Notice that when $A\in\mathbb{B}(\mathcal{H})$ is a given positive operator, replacing $A$ by $\frac{1}{\|A\|}A$ (when $A\not=0$) in Corollary \ref{10} implies
\begin{align*}
\left|\left<Ax,y\right>\right|\leq\frac{\|A\|}{2}\left(|\left<x,y\right>|+\|x\|\;\|y\|\right), x,y\in\mathcal{H}.
\end{align*}

However, this inequality is not true for an arbitrary nonzero $A\in\mathbb{B}(\mathcal{H}).$ This can be seen by taking the example:
$$A=\left[\begin{array}{cc}0&1\\0&0\end{array}\right], x=\left[\begin{array}{c}0\\1\end{array}\right], y=\left[\begin{array}{c}1\\0\end{array}\right].$$

 However, using the polar decomposition $A=U|A|$, one can see that the inequality
\begin{align*}
|\left<Ax,y\right>|&\leq \frac{\|A\|}{2}\left(|\left<Ux,y\right>|+\|x\|\;\|U^{*}y\|\right)\\
&\leq \frac{\|A\|}{2}\left(|\left<Ux,y\right>|+\|x\|\;\|y\|\right)
\end{align*}
 holds for $  x,y\in\mathcal{H}$ and  $A\in\mathbb{B}(\mathcal{H})$ with polar decomposition $A=U|A|.$ We notice here that $U$ is a partial isometry, and hence $\|U\|=\|U^*\|\leq 1.$ This justifies the second inequality above.

It is worth mentioning that, in the case when $B$ is a positive operator, the constant 4 in the inequality $\omega(AB)\leq 4\omega(A)\omega(B)$ can be reduced to ${3}/{2}\;$ as shown in the following way:
\[\begin{aligned}
   \omega \left( AB \right)&\le \frac{\left\| B \right\|}{2}\left( \omega \left( A \right)+\left\| A \right\| \right) \\ 
 & \le \frac{3}{2}\omega \left( A \right)\left\| B \right\| \\ 
 & =\frac{3}{2}\omega \left( A \right)\omega \left( B \right).  
\end{aligned}\]
In particular, if $B$ is positive contractive, then
\[\omega \left( AB \right)\le \frac{3}{2}\omega \left( A \right).\]
\end{remark}
As a conclusion of the above remark, and due to the importance of its finding, we summarize as follows. 
\begin{corollary}\label{cor3}
Let $A,B\in\mathbb{B}(\mathcal{H})$ be such that $B$ is positive. Then $$\omega(AB)\leq \frac{3}{2}\|B\|\omega(A).$$
\end{corollary}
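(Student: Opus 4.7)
The plan is to derive the corollary directly from Corollary \ref{10}, applied not to $B$ itself but to its normalization $B/\|B\|$, which is positive contractive when $B$ is a nonzero positive operator (the case $B=0$ is trivial). First I would record the scalar-multiple version of Corollary \ref{10}: for any nonzero positive $B\in\mathbb{B}(\mathcal{H})$ and any $u,v\in\mathcal{H}$,
\[
|\langle Bu,v\rangle|\le \frac{\|B\|}{2}\bigl(\|u\|\,\|v\|+|\langle u,v\rangle|\bigr),
\]
obtained by applying Corollary \ref{10} to $B/\|B\|$ and rescaling.

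Next I would specialize the two free vectors to produce $\omega(AB)$ on the left. For a unit vector $x\in\mathcal{H}$, take $u=x$ and $v=A^{*}x$ in the displayed inequality. Using $\langle Bx,A^{*}x\rangle=\langle ABx,x\rangle$ and $\langle x,A^{*}x\rangle=\langle Ax,x\rangle$, this gives
\[
|\langle ABx,x\rangle|\le \frac{\|B\|}{2}\bigl(\|A^{*}x\|+|\langle Ax,x\rangle|\bigr).
\]
Taking the supremum over all unit vectors $x$ and using $\|A^{*}x\|\le\|A^{*}\|=\|A\|$ together with $|\langle Ax,x\rangle|\le\omega(A)$, I obtain the intermediate bound
\[
\omega(AB)\le \frac{\|B\|}{2}\bigl(\|A\|+\omega(A)\bigr),
\]
which is exactly the first inequality in the chain spelled out in Remark \ref{rem3}.

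Finally, I would invoke the standard estimate $\|A\|\le 2\omega(A)$ from the Introduction to absorb $\|A\|$ into the numerical radius, yielding
\[
\omega(AB)\le \frac{\|B\|}{2}\bigl(2\omega(A)+\omega(A)\bigr)=\frac{3}{2}\|B\|\,\omega(A),
\]
which is the desired inequality.

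I do not expect a real obstacle here, since every ingredient (Corollary \ref{10}, the identity $\langle Bx,A^{*}x\rangle=\langle ABx,x\rangle$, and $\|A\|\le 2\omega(A)$) is already in hand; the only mild subtlety is to remember to handle $B=0$ separately so that the rescaling $B/\|B\|$ is well-defined, and to note that the choice $v=A^{*}x$ is what links the Buzano-type bound of Corollary \ref{10} to the numerical radius of the product $AB$.
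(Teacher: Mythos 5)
Your proposal is correct and takes essentially the same route as the paper: the proof given in Remark \ref{rem3} rests on the very same intermediate bound $\omega(AB)\le \frac{\|B\|}{2}\left(\omega(A)+\|A\|\right)$, obtained from the normalized form of Corollary \ref{10}, followed by the estimate $\|A\|\le 2\omega(A)$. Your write-up merely makes explicit the substitution $v=A^{*}x$ and the $B=0$ case, which the paper leaves implicit.
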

It is worth mentioning that the above result has been proved in \cite[Corollary 2.6]{r2} using a different method.

 As another application of Corollary \ref{10}, we present the following numerical radius and operator norm applications. 

\begin{corollary}\label{corrr}
Let $A,S,T\in\mathbb{B}(\mathcal{H})$ be such that $A$ is positive contractive. Then
\begin{equation}\label{014}
\omega \left( SAT \right)\le \frac{1}{4}\left\| {{\left| T \right|}^{2}}+{{\left| {{S}^{*}} \right|}^{2}} \right\|+\frac{1}{2}\omega \left( {{ST}} \right).
\end{equation} 
Moreover,
\begin{equation}\label{15}
\left\| SAT \right\|\le \frac{\left\| T \right\|\left\| S \right\|+\left\| ST \right\|}{2}.
\end{equation}
In particular, 
\begin{align*}
\omega(ST)\leq \frac{1}{2}\left\| {{\left| T \right|}^{2}}+{{\left| {{S}^{*}} \right|}^{2}} \right\|.
\end{align*}
\end{corollary}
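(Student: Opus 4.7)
The plan is to derive all three inequalities from Corollary \ref{10} by a careful choice of the two vectors to which that corollary is applied, followed by taking appropriate suprema over unit vectors.

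For the numerical radius inequality \eqref{014}, I would start from $\langle SATx,x\rangle=\langle ATx,S^{*}x\rangle$ and apply Corollary \ref{10} (with $A$ positive contractive) to the vectors $Tx$ and $S^{*}x$, obtaining
\[
|\langle SATx,x\rangle|\le \tfrac{1}{2}\bigl(\|Tx\|\,\|S^{*}x\|+|\langle Tx,S^{*}x\rangle|\bigr)
= \tfrac{1}{2}\bigl(\|Tx\|\,\|S^{*}x\|+|\langle STx,x\rangle|\bigr).
\]
The second term is bounded by $\tfrac{1}{2}\omega(ST)$ after taking the supremum over unit $x$. For the first term I would use the AM--GM bound $\|Tx\|\,\|S^{*}x\|\le \tfrac{1}{2}(\|Tx\|^{2}+\|S^{*}x\|^{2})=\tfrac{1}{2}\langle(|T|^{2}+|S^{*}|^{2})x,x\rangle$, so that the supremum is at most $\tfrac{1}{2}\||T|^{2}+|S^{*}|^{2}\|$ (using that this operator is positive). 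Combining yields \eqref{014}.

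For the norm inequality \eqref{15}, I would use $\|SATx\|=\sup_{\|y\|=1}|\langle SATx,y\rangle|=\sup_{\|y\|=1}|\langle ATx,S^{*}y\rangle|$ and apply Corollary \ref{10} to $Tx$ and $S^{*}y$, giving
\[
|\langle SATx,y\rangle|\le \tfrac{1}{2}\bigl(\|Tx\|\,\|S^{*}y\|+|\langle STx,y\rangle|\bigr).
\]
Taking the supremum over unit $x,y$, the first term becomes $\|T\|\,\|S^{*}\|=\|T\|\,\|S\|$ and the second becomes $\|ST\|$, yielding \eqref{15}.

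Finally, the ``In particular'' assertion is the slickest step: it follows by choosing $A=I$ (which is trivially positive contractive) in \eqref{014}. Then $\omega(SIT)=\omega(ST)$ appears on both sides, and a simple rearrangement gives $\tfrac{1}{2}\omega(ST)\le \tfrac{1}{4}\||T|^{2}+|S^{*}|^{2}\|$, hence the stated bound. The only place where anything subtle happens is the AM--GM / positivity step that converts $\|Tx\|\,\|S^{*}x\|$ into a quantity of the form $\langle(|T|^{2}+|S^{*}|^{2})x,x\rangle$; this is really the whole substance of the argument, and it retrieves the known Dragomir inequality cited in the introduction.
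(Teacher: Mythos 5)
Your proposal is correct and follows essentially the same route as the paper: both apply Corollary \ref{10} to the pairs $(Tx,S^{*}x)$ and $(Tx,S^{*}y)$, use the arithmetic--geometric mean inequality to pass from $\|Tx\|\,\|S^{*}x\|$ to $\tfrac{1}{2}\langle(|T|^{2}+|S^{*}|^{2})x,x\rangle$, and then take suprema over unit vectors. The ``in particular'' step via $A=I$ and rearrangement is exactly the intended derivation.
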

\begin{proof}
Replacing $x$ by $Tx$ and $y$ by ${{S}^{*}}x$ in  Corollary \ref{10}, we get
\[\left| \left\langle SATx,x \right\rangle  \right|\le \frac{\left\| Tx \right\|\left\| {{S}^{*}}x \right\|+\left| \left\langle Tx,{{S}^{*}}x \right\rangle  \right|}{2}.\]
Thus, by the arithmetic-geometric mean inequality, we obtain
	\[\begin{aligned}
   \left| \left\langle SATx,x \right\rangle  \right|&\le \frac{\left\| Tx \right\|\left\| {{S}^{*}}x \right\|+\left| \left\langle {{ST}}x,x \right\rangle  \right|}{2} \\ 
 & =\frac{\sqrt{\left\langle {{\left| T \right|}^{2}}x,x \right\rangle \left\langle {{\left| {{S}^{*}} \right|}^{2}}x,x \right\rangle }+\left| \left\langle {{ST}}x,x \right\rangle  \right|}{2} \\ 
 & \le \frac{1}{2}\left( \frac{1}{2}	\left({\left\langle {{\left| T \right|}^{2}}x,x \right\rangle +\left\langle {{\left| {{S}^{*}} \right|}^{2}}x,x \right\rangle }\right)+\left| \left\langle {{ST}}x,x \right\rangle  \right| \right) \\ 
 & =\frac{1}{2}\left( \frac{1}{2}\left\langle \left( {{\left| T \right|}^{2}}+{{\left| {{S}^{*}} \right|}^{2}} \right)x,x \right\rangle +\left| \left\langle {{ST}}x,x \right\rangle  \right| \right).  
\end{aligned}\]
Therefore,
	\[\left| \left\langle SATx,x \right\rangle  \right|\le \frac{1}{2}\left( \frac{1}{2}\left\langle \left( {{\left| T \right|}^{2}}+{{\left| {{S}^{*}} \right|}^{2}} \right)x,x \right\rangle +\left| \left\langle {{ST}}x,x \right\rangle  \right| \right).\]
Now, by taking supremum over all unit vector $x\in \mathcal{H}$, we get the inequality \eqref{014}.

To prove \eqref{15}, letting $x=Tx$ and $y={{S}^{*}}y$ in Corollary \ref{10}, we have
	\[\begin{aligned}
   \left| \left\langle SATx,y \right\rangle  \right|&\le \frac{\left\| Tx \right\|\left\| {{S}^{*}}y \right\|+\left| \left\langle Tx,{{S}^{*}}y \right\rangle  \right|}{2} \\ 
 & =\frac{\left\| Tx \right\|\left\| {{S}^{*}}y \right\|+\left| \left\langle STx,y \right\rangle  \right|}{2}.  
\end{aligned}\]
Now, the desired inequality \eqref{15} follows by taking supremum over $x,y\in \mathcal{H}$ with $\left\| x \right\|=\left\| y \right\|=1$.
\end{proof}
In dealing with numerical radius inequalities, we are interested in power inequalities. We refer the reader to \cite{drag2,drag3,kitt1,sattari} as a sample of references treating such inequalities. In the following result, we use Corollary \ref{corrr} to obtain a power inequality for the numerical radius.

\begin{corollary}
Let $A,S,T\in\mathbb{B}(\mathcal{H})$ be such that $A$ is positive contractive. Then
\begin{equation}\label{14}
\omega^r \left( SAT \right)\le \frac{1}{4}\left\| {{\left| T \right|}^{2r}}+{{\left| {{S}^{*}} \right|}^{2r}} \right\|+\frac{1}{2}\omega^r \left( {{ST}} \right),
\end{equation} 
for $r\geq 1$.
\end{corollary}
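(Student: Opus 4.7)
The plan is to bootstrap from Corollary \ref{corrr} at the pointwise level, before taking the supremum. Specifically, I would start from the inner-product inequality established in the proof of Corollary \ref{corrr}, namely
\[
\left|\left\langle SATx,x\right\rangle\right|\le \frac{\|Tx\|\,\|S^{*}x\|+|\langle STx,x\rangle|}{2},
\]
valid for every $x\in\mathcal{H}$ when $A$ is a positive contractive operator. Raising both sides to the power $r\ge 1$ and applying the elementary convexity inequality $\left(\frac{a+b}{2}\right)^{r}\le \frac{a^{r}+b^{r}}{2}$ to the right-hand side gives
\[
\left|\left\langle SATx,x\right\rangle\right|^{r}\le \frac{\|Tx\|^{r}\|S^{*}x\|^{r}+|\langle STx,x\rangle|^{r}}{2}.
\]

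Next, I would bound the mixed norm term. By the AM--GM inequality,
\[
\|Tx\|^{r}\|S^{*}x\|^{r}\le \tfrac12\left(\|Tx\|^{2r}+\|S^{*}x\|^{2r}\right)=\tfrac12\left(\langle |T|^{2}x,x\rangle^{r}+\langle |S^{*}|^{2}x,x\rangle^{r}\right).
\]
For a unit vector $x$ and a positive operator $P$, the McCarthy (Jensen) inequality gives $\langle Px,x\rangle^{r}\le \langle P^{r}x,x\rangle$ when $r\ge 1$, since $t\mapsto t^{r}$ is convex on $[0,\infty)$. Applying this with $P=|T|^{2}$ and $P=|S^{*}|^{2}$ yields
\[
\|Tx\|^{r}\|S^{*}x\|^{r}\le \tfrac12\,\langle(|T|^{2r}+|S^{*}|^{2r})x,x\rangle
\]
for every unit $x$. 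Combining the two estimates gives, for $\|x\|=1$,
\[
\left|\left\langle SATx,x\right\rangle\right|^{r}\le \frac{1}{4}\,\langle(|T|^{2r}+|S^{*}|^{2r})x,x\rangle+\frac{1}{2}\,|\langle STx,x\rangle|^{r}.
\]

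Finally, taking the supremum over all unit vectors $x\in\mathcal{H}$ and using that the supremum of a sum is at most the sum of the suprema, together with the identities $\sup_{\|x\|=1}\langle(|T|^{2r}+|S^{*}|^{2r})x,x\rangle=\||T|^{2r}+|S^{*}|^{2r}\|$ (as the operator is positive) and $\sup_{\|x\|=1}|\langle STx,x\rangle|^{r}=\omega^{r}(ST)$, delivers \eqref{14}. The only step that requires care is the passage from $\langle |T|^{2}x,x\rangle^{r}$ to $\langle |T|^{2r}x,x\rangle$; this is where the condition $r\ge 1$ is essentially used, via the McCarthy inequality. All remaining manipulations are elementary convexity arguments.
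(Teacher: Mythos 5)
Your proof is correct, but it takes a genuinely different route from the paper. The paper stays at the operator level: it applies $t\mapsto t^{r}$ to the already-established norm inequality of Corollary \ref{corrr}, writing the right-hand side as $\frac{1}{2}\bigl(\bigl\|\tfrac{|T|^{2}+|S^{*}|^{2}}{2}\bigr\|+\omega(ST)\bigr)$, using scalar convexity of $t^{r}$, and then invoking the Bourin--Lee inequality $\|f(\tfrac{A+B}{2})\|\le\tfrac{1}{2}\|f(A)+f(B)\|$ for increasing convex $f$ to pass from $\bigl\|\tfrac{|T|^{2}+|S^{*}|^{2}}{2}\bigr\|^{r}$ to $\tfrac{1}{2}\||T|^{2r}+|S^{*}|^{2r}\|$. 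You instead work pointwise on unit vectors before taking any supremum, replacing the Bourin--Lee step by the McCarthy inequality $\langle Px,x\rangle^{r}\le\langle P^{r}x,x\rangle$ applied to $|T|^{2}$ and $|S^{*}|^{2}$; every step of your chain (monotone power, $(\tfrac{a+b}{2})^{r}\le\tfrac{a^{r}+b^{r}}{2}$, AM--GM, McCarthy, sup of a sum bounded by sum of sups, and $\sup_{\|x\|=1}\langle Qx,x\rangle=\|Q\|$ for positive $Q$) checks out. What your approach buys is self-containedness --- it avoids citing the Bourin--Lee unitary-orbit result and in fact establishes the stronger pointwise estimate $|\langle SATx,x\rangle|^{r}\le\tfrac{1}{4}\langle(|T|^{2r}+|S^{*}|^{2r})x,x\rangle+\tfrac{1}{2}|\langle STx,x\rangle|^{r}$ for each unit vector $x$. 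What the paper's approach buys is brevity given the cited machinery, and it transparently generalizes to other increasing convex functions $f$ in place of $t^{r}$.
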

\begin{proof}
This follows from Corollary \ref{corrr} and the facts that $t\mapsto t^r, r\geq 1$ is a convex increasing function on $[0,\infty)$ and that
$$\left\|f\left(\frac{A+B}{2}\right)\right\|\leq \frac{1}{2}\|f(A)+f(B)\|,$$ for any increasing convex function $f:[0,\infty)\to [0,\infty)$ and positive operators $A,B,$ \cite[Corollary 2.2]{bourin}.
\end{proof}

Next, we use Corollary \ref{corrr} to obtain a refinement of the inequality $\omega(T)\leq \|T\|.$
\begin{corollary}
Let $T\in\mathbb{B}(\mathcal{H})$ be a given operator with the polar decomposition $T=U|T|.$ Then
\begin{align*}
\omega(T)&\leq \frac{1}{2}\left(\|T\|+\|T\|^{\frac{1}{2}}\omega\left(U|T|^{\frac{1}{2}}\right)\right)\\
&\leq \frac{1}{2}\left(\|T\|+\|T\|^{\frac{1}{2}}\left\|U|T|^{\frac{1}{2}}\right\|\right)\\
&\leq \frac{1}{2}\left(\|T\|+\|T\|^{\frac{1}{2}}\|U\| \|T\|^{\frac{1}{2}}\right)\\
&\leq\|T\|.
\end{align*}
\end{corollary}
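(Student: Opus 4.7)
The strategy is to invoke Corollary \ref{10} with a substitution chosen so that the left-hand side becomes a scaled version of $\langle Tx,x\rangle$, while the right-hand side naturally produces the term $\omega(U|T|^{1/2})$. Assume $T\neq 0$ (otherwise the chain is trivial). Since $|T|^{1/2}$ is positive with $\||T|^{1/2}\|=\|T\|^{1/2}$, the operator
\[
A:=\frac{|T|^{1/2}}{\|T\|^{1/2}}
\]
is positive contractive, so Corollary \ref{10} applies.

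The key step is to feed Corollary \ref{10} the vectors $|T|^{1/2}x$ and $U^{*}x$. Using $T=U|T|$ and the self-adjointness of $|T|^{1/2}$, the left-hand side collapses nicely:
\[
\bigl|\langle A|T|^{1/2}x,\, U^{*}x\rangle\bigr|
= \frac{1}{\|T\|^{1/2}}\bigl|\langle U|T|x,x\rangle\bigr|
= \frac{|\langle Tx,x\rangle|}{\|T\|^{1/2}},
\]
while the inner-product term on the right satisfies
\[
\bigl|\langle |T|^{1/2}x,\, U^{*}x\rangle\bigr|
= \bigl|\langle U|T|^{1/2}x,\,x\rangle\bigr|
\le \omega\bigl(U|T|^{1/2}\bigr)
\]
for every unit vector $x$.

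Combining this with the routine bounds $\||T|^{1/2}x\|\le \|T\|^{1/2}$ and $\|U^{*}x\|\le\|U^{*}\|\le 1$ (the latter because $U$ is a partial isometry), Corollary \ref{10} gives
\[
\frac{|\langle Tx,x\rangle|}{\|T\|^{1/2}}
\le \frac{\|T\|^{1/2}+\omega(U|T|^{1/2})}{2}.
\]
Multiplying by $\|T\|^{1/2}$ and taking the supremum over $\|x\|=1$ establishes the first inequality of the chain. The remaining three inequalities are then purely formal: the second uses $\omega(X)\le\|X\|$ with $X=U|T|^{1/2}$; the third uses submultiplicativity $\|U|T|^{1/2}\|\le\|U\|\,\|T\|^{1/2}$; and the fourth combines $\|U\|\le 1$ with $\|T\|^{1/2}\cdot\|T\|^{1/2}=\|T\|$.

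The only creative step is identifying the correct substitution that makes $Tx$ emerge on the left while leaving $U|T|^{1/2}$ available to be recognized as a numerical-radius term on the right; once that is spotted, nothing presents a genuine obstacle, and all remaining manipulations are standard.
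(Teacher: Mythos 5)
Your proof is correct and follows essentially the same route as the paper: both apply the Buzano-type bound of Corollary \ref{10} (equivalently, the normalized form in Remark \ref{rem3}) to the positive operator $|T|^{\frac{1}{2}}$ after splitting $\left\langle Tx,x \right\rangle=\left\langle |T|^{\frac{1}{2}}x,|T|^{\frac{1}{2}}U^{*}x \right\rangle$, the only cosmetic difference being which slot absorbs the second factor of $|T|^{\frac{1}{2}}$. The remaining three inequalities are handled identically.
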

\begin{proof}
We prove the first inequality, from which the other inequalities follow immediately. Let $T=U|T|$ be the polar decomposition of $T$. Then, for any vectors $x,y\in\mathcal{H}$,
\begin{equation}\label{needed_1}
\left|\left<Tx,y\right>\right|=\left|\left<U|T|x,y\right>\right|=\left|\left<|
T|^{\frac{1}{2}}x,|
T|^{\frac{1}{2}}U^*y\right>\right|.
\end{equation}
Let $A$ be any positive operator. Remark \ref{rem3} implies
\begin{align*}
\left|\left<Ax,y\right>\right|\leq\frac{\|A\|}{2}\left(\left|\left<x,y\right>\right|+\|x\|\;\|y\|\right).
\end{align*}
This together with \eqref{needed_1} imply
\begin{align*}
\left|\left<Tx,x\right>\right|&=\left|\left<|
T|^{\frac{1}{2}}x,|T|^{\frac{1}{2}}U^*x\right>\right|\\
&\leq\frac{\left\|\;|T|^{\frac{1}{2}}\right\|}{2}\left(\left|\left<x,|T|^{\frac{1}{2}}U^*x\right>\right|+ \|x\|\left\||T|^{\frac{1}{2}}U^*x\right\|          \right)\\
&=\frac{\|T\|^{\frac{1}{2}}}{2}\left(\left|\left<U|T|^{\frac{1}{2}}x,x\right>\right|+\|x\|^2\;  \|T\|^{\frac{1}{2}} \|U^*\|   \right).
\end{align*}
Noting that $\|U^*\|=\|U\|=1$ and taking the supremum over all unit vectors $x\in\mathcal{H}$, we obtain
$$\omega(T)\leq \frac{\|T\|^{\frac{1}{2}}}{2}\left(\omega\left(U|T|^{\frac{1}{2}}\right)+\|T\|^{\frac{1}{2}}\right).$$
This completes the proof of the first inequality, as desired.
\end{proof}

\section*{Acknowledgment} The authors would like to express there sincere gratitude to the anonymous reviewers, whose major comments have considerably improved the quality of the paper.

{\tiny \vskip 0.3 true cm }

{\tiny (M. Sababheh) Department of Basic Sciences, Princess Sumaya University For Technology, Al Jubaiha, Amman 11941, Jordan.}

{\tiny \textit{E-mail address:} sababheh@psut.edu.jo}

{\tiny \vskip 0.3 true cm }

{\tiny (H. R. Moradi) Department of Mathematics, Payame Noor University (PNU), P.O. Box 19395-4697, Tehran, Iran.}

{\tiny \textit{E-mail address:} hrmoradi@mshdiau.ac.ir }

{\tiny \vskip 0.3 true cm }

{\tiny (Z. Heydarbeygi) Department of Mathematics, Payame Noor University (PNU), P.O. Box 19395-4697, Tehran, Iran.}

{\tiny \textit{E-mail address:} zahraheydarbeygi525@gmail.com}

\end{document}